\def\titlerunning#1{\gdef\titrun{#1}}
\def\author#1{\gdef\autrun{\def\and{\unskip, }#1}\gdef\@author{#1}}
\def\address#1{{\def\and{\\\hspace*{18pt}}\renewcommand{\thefootnote}{}%
\footnote {#1}}%
\markboth{\autrun}{\titrun}}
\def\email#1{\hspace*{4pt}{\em e-mail}: #1}
\newtheorem{thm}{Theorem}[section]
\newtheorem{cor}[thm]{Corollary}
\theoremstyle{definition}
\newtheorem{example}[thm]{Example}
\def\GF{{\rm GF}}
\def\Cay{{\rm Cay}}
\begin{document}

\titlerunning{}
\title{New constructions of divisible design Cayley graphs}

\author{Dean Crnkovi\' c and Andrea \v Svob}

\maketitle

\address{D. Crnkovi\'{c}, A. \v Svob: Department of Mathematics, University of Rijeka, Radmile Matej\v{c}i\'c 2, 51000 Rijeka, Croatia;
\email{\{deanc,asvob\}@math.uniri.hr} 
\and 
Corresponding author: A. \v Svob
}

\begin{abstract}
Divisible design graphs were introduced in 2011 by Haemers, Kharaghani and Meulenberg. 
Further, divisible design graphs which can be obtained as Cayley graphs were recently studied by Kabanov and Shalaginov. 
In this paper we give new constructions of divisible design Cayley graphs and classify divisible design Cayley graphs on $v \le 27$ vertices.
\end{abstract}

\bigskip

{\bf 2020 Mathematics Subject Classification:} 05B30, 05E18, 05E30.

{\bf Keywords:} divisible design, Cayley graph, regular group.

\section{Introduction}\label{intro}

We assume that the reader is familiar with the basic facts of group theory,
graph theory and design theory. We refer the reader to \cite{bjl, atlas, diestel, r} on terms not defined in this paper.

\bigskip

A graph $\Gamma$ can be interpreted as a design by taking the vertices of
$\Gamma$ as points, and the neighbourhoods of the vertices as blocks.
Such a design is called a neighbourhood design of $\Gamma$. The adjacency
matrix of $\Gamma$ is the incidence matrix of its neighbourhood design.

A $k$-regular graph on $v$ vertices with the property that any two distinct
vertices have exactly $\lambda$ common neighbours is called a $(v,k, \lambda)$-graph
(see \cite{rudvalis}).
The neighbourhood design of a $(v,k, \lambda)$-graph is a symmetric $(v,k, \lambda)$
design. Haemers, Kharaghani and Meulenberg have defined divisible design graphs
(DDGs for short) as a generalization of $(v,k, \lambda)$-graphs (see \cite{ddg}).

An incidence structure with $v$ points and the constant block size $k$ is a 
(group) divisible design with parameters $(v,k, \lambda_1, \lambda_2, m,n)$ 
whenever the point set can be partitioned into $m$ classes of size $n$, 
such that two points from the same class are incident with exactly $\lambda_1$ common blocks, and two points from different classes are 
incident with exactly $\lambda_2$ common blocks. 
Note that a DDG with $m = 1$, $n = 1$, or $\lambda_1 = \lambda_2$ is a $(v, k, \lambda)$-graph. If this is the case, we
call the DDG improper, otherwise it is called proper.
Panasenko and Shalaginov made a list of all proper divisible design graphs with up to 39 vertices \cite{p-s}.
A divisible design $D$ is said to be symmetric (or to have the dual property) if the dual of $D$ is a 
divisible design with the same parameters as $D$. The definition of
a DDG yields the following theorem (see \cite{ddg}).

\begin{thm}
If $D$ is a divisible design graph with parameters 
$(v,k, \lambda_1, \lambda_2, m,n)$ then its 
neighbourhood design is a symmetric divisible design 
$(v,k, \lambda_1, \lambda_2, m,n)$.
\end{thm}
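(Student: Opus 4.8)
The plan is simply to translate the definition of a divisible design graph into the language of its neighbourhood design; essentially nothing beyond unwinding the definitions is needed. Let $\Gamma$ be the DDG on vertex set $V$, with canonical partition $V=V_1\cup\cdots\cup V_m$ into $m$ classes of size $n$, and let $A$ be its adjacency matrix. By construction the neighbourhood design $D$ has point set $V$, block set $\{N(x):x\in V\}$, and incidence matrix $A$ (a point $x$ being incident with the block $N(z)$ exactly when $z\sim x$). Since $\Gamma$ is $k$-regular, every block $N(z)$ has size $k$ and every point lies in exactly $k$ blocks, so $D$ has $v$ points, $v$ blocks, constant block size $k$ and constant replication number $k$.

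Next I would check the divisibility conditions using the same partition $V_1\cup\cdots\cup V_m$ as the point partition of $D$. For distinct points $x,y\in V$, the blocks through both of them are exactly the $N(z)$ with $z\sim x$ and $z\sim y$, i.e. the common neighbours of $x$ and $y$; by the defining property of a DDG this number is $\lambda_1$ if $x,y$ lie in the same class and $\lambda_2$ otherwise. Hence $D$ is a (group) divisible design with parameters $(v,k,\lambda_1,\lambda_2,m,n)$.

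Finally, for the dual property I would use that $\Gamma$ is undirected and loopless, so $A^{T}=A$. Consequently the incidence matrix of the dual $D^{*}$ is $A^{T}=A$, so $D^{*}$ is, after relabelling, the same incidence structure as $D$; taking the block partition of $D$ induced by $V_1\cup\cdots\cup V_m$ (via the identification of blocks with vertices), $D^{*}$ is again a divisible design with parameters $(v,k,\lambda_1,\lambda_2,m,n)$. Concretely, $|N(x)\cap N(y)|$ is once more the number of common neighbours of $x$ and $y$, hence $\lambda_1$ or $\lambda_2$ according to the classes of $x$ and $y$, so $D$ is a symmetric divisible design with the stated parameters. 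I do not expect any genuine obstacle here: the only points deserving a line of care are the symmetry $A^{T}=A$ that yields the dual property, and the bookkeeping that makes ``the same parameters'' literally correct (in particular, that the replication number of $D$ equals the block size $k$, and that one partition serves for both points and blocks).
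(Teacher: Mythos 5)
Your proof is correct and is exactly the intended argument: the paper itself gives no proof, merely remarking that the theorem follows from the definition of a DDG and citing Haemers--Kharaghani--Meulenberg, and your definition-unwinding (incidence matrix $=$ adjacency matrix $A$, common blocks $=$ common neighbours, and $A^{T}=A$ giving the dual property) is precisely that argument. No gaps; your closing remarks on the replication number and on using one partition for points and blocks cover the only bookkeeping worth mentioning.
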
 

Conversely, a symmetric divisible design with a polarity with no absolute points is the neighborhood design of a DDG (see \cite{ddg}).

\bigskip

Let $G$ be a group and $S$  a subset of $G$ not containing the identity element of the group, which will be denoted by $e$. The vertices of the Cayley digraph $\Cay(G,S)$ are the elements 
of the group $G$, and its arcs are all the couples $(g,gs)$ with $g \in G$ and $s \in S$. Goryainov, Kabanov, and Shalaginov \cite{kabanov} studied divisible design
Cayley graphs. Further, divisible design Cayley digraphs were studied in \cite{c-h-s}.
In this paper we give some new constructions of divisible design Cayley graphs, and classify such graphs on $v \le 27$ vertices.

\section{Divisible design Cayley graphs}\label{CDDGs}

The following well-known theorem (see \cite{sabidussi}) provides a characterization of Cayley graphs.

\begin{thm}\label{regular}
A graph $G$ is a Cayley graph of a group if and only if $Aut(G)$ contains a regular subgroup. 
\end{thm}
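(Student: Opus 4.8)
The plan is to prove the two implications separately, each by an explicit construction. For the forward direction, suppose $G = \Cay(H,S)$ for some group $H$ and some inverse-closed subset $S \subseteq H \setminus \{e\}$. I would consider the left-translation maps $\lambda_h \colon x \mapsto hx$ for $h \in H$. The key observation is that each $\lambda_h$ is a graph automorphism, because adjacency in a Cayley graph depends only on $x^{-1}y$, which is invariant under left multiplication; hence $\{\lambda_h : h \in H\}$ is a subgroup of $Aut(G)$ acting on the vertex set (identified with $H$) transitively and without nontrivial fixed points, that is, regularly.

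For the converse, suppose $R \le Aut(G)$ acts regularly on the vertex set $V$. Fix a base vertex $v_0$. Regularity provides, for each vertex $x$, a unique $r_x \in R$ with $r_x(v_0) = x$, and $x \mapsto r_x$ is a bijection $V \to R$ which I would use to identify $V$ with $R$. I would then set $S = \{\, r \in R : r(v_0) \text{ is adjacent to } v_0 \,\}$ and claim $G \cong \Cay(R,S)$ under this identification. The points to check are: $e \notin S$ (no loops); $S = S^{-1}$ (the graph is undirected, together with the fact that $r^{-1} \in Aut(G)$); and that for vertices $x,y$ one has $x \sim y$ in $G$ if and only if $r_x^{-1} r_y \in S$. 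This last equivalence follows by applying the automorphism $r_x^{-1}$, which moves $x$ to $v_0$ and sends $y$ to the vertex $(r_x^{-1} r_y)(v_0)$.

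I expect the main obstacle to be purely bookkeeping: one must be careful that the bijection $x \leftrightarrow r_x$ translates the ``difference'' $r_x^{-1}r_y$ to the correct vertex, and that the left-versus-right conventions in the definition of $\Cay(R,S)$ are consistent with the arc convention $(g,gs)$ used in the excerpt, so that the two adjacency conditions genuinely coincide. Once the identification is set up correctly, both directions are short, and no input beyond the definitions of a regular action and of a Cayley (di)graph is required.
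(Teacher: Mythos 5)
Your proof is correct and is exactly the standard argument: left translations give the regular subgroup in the forward direction, and in the converse the identification of vertices with elements of the regular subgroup via a base vertex, with the connection set $S=\{r\in R: r(v_0)\sim v_0\}$, recovers the Cayley graph structure, consistent with the paper's arc convention $(g,gs)$. The paper itself offers no proof (it simply cites Sabidussi), and your argument is the classical one found there, so there is nothing further to reconcile.
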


The following characterization of divisible design Cayley graphs is given in \cite{kabanov}.
Note that a Deza graph with parameters $(n,k,b,a)$ is a $k$-regular graph with $n$ vertices in which any two vertices have $a$ or $b$ $(a \le b)$ common neighbours. 

\begin{thm} \label{cosets-DDG}
Let $\Cay(G,S)$ be a Deza graph with parameters $(v,k,b,a)$ and $SS^{-1}=aA+bB+k{e}$, where $A$, $B$ and $\{e\}$ be a partition of G. If either $A \cup \{e\}$ or $B \cup \{e\}$ is a subgroup of $G$, 
then $\Cay(G,S)$ is a DDG and the right cosets of this subgroup give a canonical partition of this graph.  
Conversely, if $\Cay(G,S)$ is a DDG, then the class of its canonical partition which contains the identity of G is a subgroup of G and classes of the canonical partition of DDG
coincides with the cosets of this subgroup.
\end{thm}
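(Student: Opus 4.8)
The plan is to read common--neighbour counts off the group--ring identity satisfied by $S$ and then to translate everything into statements about cosets. Identifying the vertices of $\Cay(G,S)$ with the elements of $G$, two vertices $g,h$ are adjacent exactly when $g^{-1}h\in S$, and for distinct $g,h$ the number of their common neighbours is the number of $t\in S$ with $(g^{-1}h)t\in S$, i.e.\ exactly the coefficient of $g^{-1}h$ in $SS^{-1}$ (viewed, as in the statement, as a formal sum with multiplicities). Under the hypothesis $SS^{-1}=aA+bB+ke$ this means that a pair of distinct vertices $g,h$ has $a$ common neighbours when $g^{-1}h\in A$ and $b$ common neighbours when $g^{-1}h\in B$, while the coefficient of $e$ only records $k$-regularity. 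Thus the common--neighbour count of a pair depends only on which block of $\{A,B,\{e\}\}$ contains $g^{-1}h$.

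For the ``if'' direction, suppose $H:=A\cup\{e\}$ is a subgroup of $G$ (the case $H:=B\cup\{e\}$ is the same, with the roles of $a$ and $b$ interchanged). I would take the partition of $G$ into the $m:=[G:H]$ cosets of $H$, each of size $n:=|H|$, and verify the two defining conditions directly from the first paragraph: two distinct vertices lying in one coset differ by an element of $H\setminus\{e\}=A$ and hence have $a$ common neighbours, whereas two vertices in different cosets differ by an element of $G\setminus H=B$ and hence have $b$ common neighbours. Since $\Cay(G,S)$ is $k$-regular, this presents it, together with the coset partition, as a divisible design graph with parameters $(v,k,a,b,m,n)$, and that coset partition is the canonical one (when $A$ or $B$ is empty one gets an improper DDG, which is still a DDG).

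For the converse I would run the same correspondence backwards, and this is where the work lies. Let $\mathcal P$ be the canonical partition of the DDG, with parameters $(v,k,\lambda_1,\lambda_2,m,n)$; I would restrict to the proper case $\lambda_1\ne\lambda_2$ (an improper DDG is a $(v,k,\lambda)$-graph). For each $x\in G\setminus\{e\}$ the pair $x,e$ of distinct vertices has $\lambda_1$ or $\lambda_2$ common neighbours, so the coefficient $c(x)$ of $x$ in $SS^{-1}$ lies in $\{\lambda_1,\lambda_2\}$; set $H:=\{e\}\cup\{x\in G\setminus\{e\}:c(x)=\lambda_1\}$. Combining this with the first paragraph, distinct vertices $g,h$ lie in the same class of $\mathcal P$ precisely when $c(g^{-1}h)=\lambda_1$, i.e.\ precisely when $g^{-1}h\in H$. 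Now I would push the equivalence--relation axioms of $\mathcal P$ through this biconditional: reflexivity gives $e\in H$; symmetry, specialised so that one of the two vertices is $e$, gives $H=H^{-1}$; and transitivity, again specialised with one vertex equal to $e$, gives $h_1h_2\in H$ for all $h_1,h_2\in H$. A finite nonempty subset of a group that is closed under products and inverses is a subgroup, so $H\le G$; then the class of $\mathcal P$ containing $h$ is $\{g:g^{-1}h\in H\}=hH$, so the classes are the cosets of $H$ and the one containing $e$ is $H$ itself. The main obstacle is exactly this forcing step: it is here that properness ($\lambda_1\ne\lambda_2$) is essential, since that is what pins the canonical partition down to a level set of $c$, and one has to keep the left/right coset bookkeeping consistent with the chosen Cayley--graph convention so that the partition axioms really do turn into the subgroup axioms.
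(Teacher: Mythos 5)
The paper states this theorem without proof, quoting it from Kabanov and Shalaginov \cite{kabanov}, and your argument is correct and essentially the standard one behind that result: the coefficient of $g^{-1}h$ in $SS^{-1}$ counts common neighbours, so the coset partition of the subgroup $A\cup\{e\}$ (or $B\cup\{e\}$) realizes the DDG structure, and conversely the partition axioms of the canonical partition, read through this correspondence, force the class of $e$ to be closed under inverses and products. Your two caveats are also the right ones: the left/right coset bookkeeping is only a matter of the chosen adjacency convention, and the converse should indeed be read for proper DDGs (equivalently, with the canonical partition as it is well defined there), since for $\lambda_1=\lambda_2$ an arbitrary equitable partition need not consist of cosets.
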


\begin{example}\label{v=8}
Let $G= \langle a, b \ | \ a^4=b^2= e, ab=ba \rangle$ be a group isomorphic to $Z_4 \times Z_2$, and let $S= \{a, a^2, a^3, b \}$. Then $S = S^{-1}$ and
$SS^{-1}=2A+0B+4e$, where $A= \{ a,b,ab,a^2,a^3,ba^3 \}$ and $B= \{ b \}$. Obviously, $B \cup \{ e \}$ is a subgroup of $G$, so $\Cay(G,S)$ is a DDG and 
the right cosets of $B$ give a canonical partition of a divisible design Cayley graph with parameters $(8,4,0,2,4,2)$.
This DDG can also be constructed as a Cayley graph from the groups $D_8$ and $E_8$.
\end{example}

In \cite{kabanov}, the authors gave a nice construction of divisible design Cayley graphs from finite fields of order $q^r$, $q$ a prime power and $r>1$ that is based on Theorem \ref{cosets-DDG}.
We found Theorem \ref{cosets-DDG} very useful when constructing divisible design Cayley graphs with the help of a computer. 

\bigskip

In the sequel we give constructions of divisible design Cayley graphs and nonexistence results. 
Throughout the paper we denote by $I_v$, $O_v$ and $J_v$ the identity matrix, the zero-matrix and the all-one matrix of size $v \times v$, respectively.

\subsection{Nonexistence results}\label{nonexistence}

A list of feasible parameters for DDGs on at most 27 vertices is given in \cite{ddg}. In this section we establish nonexistence of some divisible design Cayley graphs with feasible parameters.
The results are obtained using Magma \cite{magma} and GAP \cite{gap-grape}.

\begin{thm} \label{thm-nonex}
The following divisible design Cayley graphs do not exist: 
$$(15,4,0,1,5,3), (20,9,0,4,10,2), (24,6,2,1,3,8), (24,10,6,3,3,8), (27,8,4,2,9,3).$$
\end{thm}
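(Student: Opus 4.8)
The strategy is to convert each of the five assertions into a finite search justified by the converse part of Theorem~\ref{cosets-DDG}, and then to carry that search out by computer. Suppose $\Cay(G,S)$ were a divisible design Cayley graph with one of the listed parameter sets $(v,k,\lambda_1,\lambda_2,m,n)$. Then $|G|=v$, $S=S^{-1}$, $e\notin S$ and $|S|=k$, and by Theorem~\ref{cosets-DDG} the class of the canonical partition containing $e$ is a subgroup $H\le G$ of order $n$, the other classes being its cosets. For $g\ne e$ the number of common neighbours of $e$ and $g$ in $\Cay(G,S)$ equals the coefficient $[SS^{-1}]_g$ of $g$ in the group-ring element $SS^{-1}$, so the DDG condition is equivalent to the single identity
$$SS^{-1}=ke+\lambda_1\,(H\setminus\{e\})+\lambda_2\,(G\setminus H)$$
in $\mathbb{Z}[G]$, where a subset of $G$ is identified with the sum of its elements. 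Hence the theorem amounts to the claim that for no group $G$ of order $v$ is there a subgroup $H$ of order $n$ and an inverse-closed $k$-subset $S\subseteq G\setminus\{e\}$ satisfying this equation.

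This is what is verified with GAP and Magma. For $v\in\{15,20,24,27\}$ there are only $1$, $5$, $15$ and $5$ isomorphism types of groups of order $v$, so one loops over the relevant groups taken from the small groups library. For fixed $G$ the candidate connection sets are enumerated efficiently by exploiting that inversion partitions $G\setminus\{e\}$ into fixed points (involutions) and orbits of size two, so $S$ must be a union of such orbits of total size $k$, and that $\Cay(G,S)$ and $\Cay(G,\varphi(S))$ are isomorphic and simultaneously satisfy or violate the identity for every $\varphi\in\mathrm{Aut}(G)$, so the search may be performed modulo $\mathrm{Aut}(G)$. For each surviving $S$ one computes $SS^{-1}$ and tests whether $\{g:[SS^{-1}]_g=\lambda_1\}\cup\{e\}$ is a subgroup of order $n$ whose complement carries only the coefficient $\lambda_2$; in every instance the test fails, which proves the theorem. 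To keep the enumeration within reach one builds $S$ orbit by orbit with backtracking, pruning a branch the moment a partial connection set pushes some coefficient of $SS^{-1}$ above $\max(\lambda_1,\lambda_2)$ or makes the eventual total $\sum_g[SS^{-1}]_g=k^2$ impossible to meet.

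The obstacle here is one of scale rather than of ideas: $\binom{v}{k}$ is already large (for example $\binom{24}{10}$), and there are fifteen groups of order $24$ to handle for each of the two parameter sets on $24$ vertices, so the main effort is in making the enumeration provably exhaustive yet fast. The inversion-invariance of $S$ and the reduction modulo $\mathrm{Aut}(G)$ shrink the space considerably, and the incremental coefficient and counting constraints prune it enough that every case terminates quickly; since all five parameter sets are proper, no boundary cases intervene. In the smallest case the argument even collapses to a short computation one can do by hand: for $v=15$ the group must be $Z_{15}$, its subgroup of order $3$ is unique, and the identity above forces $S$ to be a relative difference set with parameters $(5,3,4,1)$ relative to it satisfying $S=S^{-1}$; as $|S|=4$ is even and $Z_{15}$ has odd order, $S$ is a union of two inverse pairs $\{a,-a\},\{b,-b\}$, and then the quotient $a-b$ occurs at least twice in $SS^{-1}$, contradicting $\lambda_2=1$. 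The remaining four parameter sets rely genuinely on the machine search.
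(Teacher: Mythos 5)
Your proposal is correct and follows essentially the same route as the paper: the paper's proof of this theorem is precisely an exhaustive computer search with Magma and GAP, organized around the characterization in Theorem~\ref{cosets-DDG}, and your reduction to the group-ring identity $SS^{-1}=ke+\lambda_1(H\setminus\{e\})+\lambda_2(G\setminus H)$ together with enumeration of inverse-closed $k$-subsets modulo $\mathrm{Aut}(G)$ is a faithful (indeed more detailed) account of that computation. The only cosmetic point is in your hand argument for $(15,4,0,1,5,3)$: the repeated difference $a-b$ contradicts $\lambda_1=0$ or $\lambda_2=1$ according to whether it lies in $H$ or not, but since both values are less than $2$ the conclusion stands.
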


\subsection{Constructions of divisible design Cayley graphs} \label{constructions}

We will denote by $I_v$, $O_v$ and $J_v$ the identity matrix, the zero-matrix and the all-one matrix of size $v \times v$, respectively.
\bigskip

\begin{thm} \label{Kronecker} 
Let $A$ be the incidence matrix of a $(v,k, \lambda)$-graph with a regular automorphism group $G$. Then there exists a divisible design Cayley graph with parameters $(vt,k,\lambda,0,t,v)$.
\end{thm}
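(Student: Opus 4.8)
The plan is to realize the desired graph as $t$ disjoint copies of the given $(v,k,\lambda)$-graph, packaged inside a direct product group so that the whole thing remains a Cayley graph. Write $\Gamma$ for the $(v,k,\lambda)$-graph whose adjacency matrix is $A$. By hypothesis $\Gamma$ has a regular automorphism group $G$, so Theorem~\ref{regular} gives $\Gamma = \Cay(G,S)$ for some connection set $S \subseteq G$ with $e \notin S$, $S = S^{-1}$ (automatic, since $\Gamma$ is an undirected graph) and $|S| = k$. Being a $(v,k,\lambda)$-graph is equivalent to $A^2 = (k-\lambda)I_v + \lambda J_v$, which in the integral group ring $\mathbb{Z}[G]$ reads $SS^{-1} = SS = k e + \lambda (G - e)$.

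Now fix any group $K$ with $|K| = t$ (for instance $Z_t$), set $H = G \times K$, and take $T = \{(s,e_K) : s \in S\}$. One checks at once that $e_H \notin T$, $T = T^{-1}$ and $|T| = k$, so $\Cay(H,T)$ is a $k$-regular Cayley graph on $vt$ vertices. Since right multiplication by an element of $T$ never alters the second coordinate, $\Cay(H,T)$ is precisely the disjoint union of $t$ copies of $\Gamma$, one on each ``sheet'' $G \times \{x\}$, $x \in K$; equivalently its adjacency matrix is the Kronecker product $I_t \otimes A$, which explains the name of the theorem.

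It remains to exhibit the divisible design structure. The quickest route is to take as partition of the vertex set $H$ the $t$ sheets $G \times \{x\}$, each of size $v$: within a sheet the induced subgraph is a copy of $\Gamma$, so two distinct vertices of the same class have exactly $\lambda$ common neighbours, while a vertex on one sheet and a vertex on another have no common neighbour at all, since every vertex has all its neighbours on its own sheet. This is exactly the definition of a DDG with parameters $(vt,k,\lambda,0,t,v)$. Alternatively, one may appeal to Theorem~\ref{cosets-DDG}: computing in $\mathbb{Z}[H]$ gives
\[
 TT^{-1} \;=\; \sum_{s_1,s_2 \in S}(s_1 s_2,\,e_K) \;=\; k\,(e_G,e_K) \;+\; \lambda \sum_{g \in G \setminus \{e_G\}} (g,\,e_K),
\]
so with $a=0$, $b=\lambda$, $B = \{(g,e_K) : g \in G\setminus\{e_G\}\}$ and $A = \{(g,x) \in H : x \neq e_K\}$, the sets $A,B,\{e_H\}$ partition $H$, $TT^{-1} = aA + bB + k e_H$, the graph $\Cay(H,T)$ is a Deza graph with parameters $(vt,k,\lambda,0)$, and $B \cup \{e_H\} = G \times \{e_K\}$ is a subgroup of $H$; Theorem~\ref{cosets-DDG} then yields a DDG whose canonical partition is the set of cosets of $G \times \{e_K\}$, namely the $t$ sheets, each of size $v$. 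A third, purely matrix-theoretic check is that $(I_t \otimes A)^2 = I_t \otimes A^2 = (k-\lambda)(I_t \otimes I_v) + \lambda (I_t \otimes J_v)$, which is the defining adjacency identity for a DDG with the stated parameters.

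I do not expect a genuine obstacle here; the argument is a short assembly of known facts. The only points that need care are: verifying that the product construction really is a Cayley graph with a symmetric connection set (which is immediate), reading off the Deza parameters and the identity-containing class so that Theorem~\ref{cosets-DDG} applies verbatim, and observing that when $\lambda = 0$ (or $t = 1$) the resulting DDG is improper but nonetheless has exactly the claimed parameters $(vt,k,\lambda,0,t,v)$.
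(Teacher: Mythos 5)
Your proposal is correct and follows essentially the same route as the paper: the adjacency matrix is the Kronecker product $I_t \otimes A$, the group $G \times Z_t$ acts regularly (making it a Cayley graph), and the $t$ copies of the original graph give the canonical partition with $\lambda_1=\lambda$, $\lambda_2=0$. The extra verifications you supply (the group-ring computation and the appeal to Theorem~\ref{cosets-DDG}) are sound but only spell out what the paper leaves implicit.
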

\begin{proof}
The Kronecker product $I_t \otimes A$ is the adjacency matrix of a DDG with parameters $(vt,k,\lambda,0,t,v)$.
The direct product of the group $G$ and the cyclic group $Z_t$ acts regularly on this DDG. By Theorem 2, the constructed DDG is a Cayley graph.
\end{proof}

\bigskip

In the proof of Theorem \ref{Kronecker}, instead of $I_t$ one can use any symmetric permutation matrix $P$.

\begin{thm}\label{Con-4.4-hkm}
Let $A$ be the incidence matrix of a $(v,k, \lambda)$-graph with a regular automorphism group $G$. Then there exists a divisible design Cayley graph with parameters $(vn,kn,kn,\lambda n,v,n)$.
\end{thm}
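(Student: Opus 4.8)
The plan is to follow the template of the proof of Theorem \ref{Kronecker}, but with the Kronecker factor $I_t$ replaced by $J_n$. Concretely, I would set
\[
B = A \otimes J_n
\]
and show that $B$ is the adjacency matrix of a divisible design Cayley graph with parameters $(vn,kn,kn,\lambda n,v,n)$. First I would check that $B$ is a legitimate adjacency matrix: since $A$ is symmetric with zero diagonal and $J_n$ is symmetric, $B$ is symmetric; and the diagonal blocks of $B$ are $A_{ii}J_n=O_n$, so $B$ has zero diagonal and $0$-$1$ entries. The partition of the vertex set into the $v$ fibres $\{i\}\times\{1,\dots,n\}$ will serve as the canonical partition, and from $AJ_v=J_vA=kJ_v$ one gets $BJ_{vn}=kn\,J_{vn}$, so the graph is $kn$-regular.

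The core computation is $B^2$. Since $A$ is the adjacency matrix of a $(v,k,\lambda)$-graph, $A^2=(k-\lambda)I_v+\lambda J_v$, and $J_n^2=nJ_n$, so
\[
B^2 = A^2\otimes J_n^2 = n(k-\lambda)\,(I_v\otimes J_n) + n\lambda\, J_{vn}.
\]
Reading off entries: two distinct vertices in the same fibre have $n(k-\lambda)+n\lambda=kn$ common neighbours, while two vertices in different fibres have $n\lambda$ common neighbours. Since the number of blocks of the neighbourhood design through two vertices equals their number of common neighbours (using symmetry of $B$), this shows the neighbourhood design is a symmetric divisible design with parameters $(vn,kn,kn,\lambda n,v,n)$, i.e. $B$ is the adjacency matrix of a DDG with these parameters.

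It remains to exhibit a regular automorphism group. I would take $G\times Z_n$, where $g\in G$ acts as $P_g\otimes I_n$ (with $P_g$ the permutation matrix of $g$ acting on the $(v,k,\lambda)$-graph, so that $P_gAP_g^{-1}=A$) and a generator of $Z_n$ acts as $I_v\otimes C$ with $C$ the cyclic permutation matrix of order $n$. Both types of maps fix $B$: $(P_g\otimes I_n)B(P_g\otimes I_n)^{-1}=(P_gAP_g^{-1})\otimes J_n=B$, and $(I_v\otimes C)B(I_v\otimes C)^{-1}=A\otimes(CJ_nC^{-1})=B$. This group has order $vn$, acts transitively on the $vn$ vertices ($G$ being transitive on fibres and $Z_n$ transitive within a fibre), and its point stabilizer is trivial (regularity of $G$ forces $g=e$ from the first coordinate, and then the $Z_n$-part must be trivial), hence it acts regularly. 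By Theorem \ref{regular}, the graph is a Cayley graph, which completes the argument.

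I do not anticipate a genuine obstacle here; the only points requiring care are the bookkeeping in the Kronecker-product identities (the order of the two factors and the conjugation computations) and the observation that "number of common neighbours'' equals the corresponding off-diagonal entry of $B^2$, which is precisely why the term $n(k-\lambda)$ contributes to $\lambda_1$ but not to the regularity count in a contradictory way.
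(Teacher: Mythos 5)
Your proposal is correct and follows essentially the same route as the paper: the adjacency matrix is taken to be the Kronecker product $A \otimes J_n$, and the divisible design Cayley property is obtained from the regular action of the direct product of $G$ with $Z_n$ together with Theorem \ref{regular}. The only difference is that you verify the DDG parameters directly via the computation of $(A\otimes J_n)^2$, whereas the paper delegates this step to Construction 4.4 of \cite{ddg}; this is a matter of detail, not of method.
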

\begin{proof}
The Kronecker product $A \otimes J_n$ is the adjacency matrix of a DDG with parameters $(vn,kn,kn,\lambda n,v,n)$ (see \cite[Construction 4.4]{ddg}).
This DDG admits a regular action of the direct product of the cyclic group $Z_n$ and the group of $G$, so it is a divisible design Cayley graph.
\end{proof}

The strong product of two graphs with adjacency matrices $A$ and $B$ is the graph with adjacency matrix $(A + I) \otimes (B + I) - I$.

\bigskip

\begin{thm} \label{Con-4.10-hkm}
Let $\Gamma$ be a strongly regular graph with parameters $(m, k,\lambda, \lambda +1)$ with a regular automorphism group $G$. 
Then the strong product of $K_2$ with $\Gamma$ is a Cayley DDG with parameters $(2m,2k+1,2k,2 \lambda + 2,m,2)$.
\end{thm}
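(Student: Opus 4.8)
The plan is to work directly with the adjacency matrix of the strong product. Write $B = A_\Gamma$, a symmetric $(0,1)$-matrix of order $m$ with zero diagonal satisfying the strongly regular relation $B^2 = kI_m + \lambda B + (\lambda+1)(J_m - I_m - B)$. Since $A_{K_2} = J_2 - I_2$, the strong product of $K_2$ with $\Gamma$ has adjacency matrix
$$M \;=\; (A_{K_2}+I_2)\otimes(B+I_m) - I_{2m} \;=\; J_2\otimes(B+I_m) - I_{2m} \;=\; \begin{pmatrix} B & B+I_m \\ B+I_m & B \end{pmatrix}.$$
It is immediate that $M$ is symmetric, has entries in $\{0,1\}$ and zero diagonal, so it is the adjacency matrix of a graph on $2m$ vertices; each row sum equals $k + (k+1) = 2k+1$, so the graph is $(2k+1)$-regular. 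The candidate canonical partition consists of the $m$ classes $\{(1,x),(2,x)\}$, $x\in V(\Gamma)$, i.e.\ each vertex of $\Gamma$ together with the corresponding vertex of the second copy.

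For the divisible design property I would compute $M^2$ and read off the number of common neighbours of two distinct vertices from its off-diagonal entries. From $M^2 = 2\,J_2\otimes(B+I_m)^2 - 2\,J_2\otimes(B+I_m) + I_{2m}$ and the strongly regular relation, which yields $(B+I_m)^2 = (k-\lambda)I_m + B + (\lambda+1)J_m$, the terms in $B$ cancel and one obtains
$$M^2 \;=\; 2(k-\lambda-1)\,(J_2\otimes I_m) \;+\; (2\lambda+2)\,J_{2m} \;+\; I_{2m}.$$
Reading off entries: the diagonal of $M^2$ is $2(k-\lambda-1)+(2\lambda+2)+1 = 2k+1$, consistent with the regularity; for the two vertices $(1,x),(2,x)$ of a single class the relevant entry of $M^2$ is $2(k-\lambda-1)+(2\lambda+2)=2k$; and for vertices $(a,x),(b,y)$ with $x\neq y$ the entry is $2\lambda+2$, independently of $a,b$ and of whether $x\sim_\Gamma y$. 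Hence $M$ is the adjacency matrix of a DDG with parameters $(2m,2k+1,2k,2\lambda+2,m,2)$ and the partition above as its canonical partition. The same conclusion can be reached by a direct count of common neighbours, splitting according to whether the two vertices lie in the same copy of $\Gamma$ and whether they are adjacent in $\Gamma$; the matrix computation just packages all the cases at once.

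For the Cayley property, let $G$ act regularly on $V(\Gamma)$ and let $Z_2\times G$ act on $\{1,2\}\times V(\Gamma)$ by letting the nonidentity element of $Z_2$ swap the two copies of $V(\Gamma)$ while $g\in G$ sends $(i,x)$ to $(i,x^g)$ in each copy. Two vertices $(i,x)$ and $(j,y)$ are adjacent in $M$ exactly when $x\sim_\Gamma y$, or $i\neq j$ and $x=y$; both conditions are invariant under this action (the first because each $g$ is an automorphism of $\Gamma$, the second because $g$ is a bijection and the copy-swap is a fixed permutation), so the action is by graph automorphisms. It is transitive, and the stabiliser of $(1,x)$ is trivial (the $Z_2$-part must be trivial to fix the copy, and then the $G$-part is trivial since $G$ acts freely on $V(\Gamma)$), so $Z_2\times G$ acts regularly. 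By Theorem~\ref{regular}, $M$ is a Cayley graph, and hence a Cayley DDG with the asserted parameters. The one genuinely substantive point is the cancellation in the formula for $M^2$: the hypothesis $\mu=\lambda+1$ is exactly what is needed for $\lambda_2$ to be the constant $2\lambda+2$ rather than depending on adjacency in $\Gamma$, and checking this is the heart of the argument.
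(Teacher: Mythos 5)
Your proof is correct, and it differs from the paper's only in how the divisible design property is established: the paper simply cites \cite[Construction 4.10]{ddg} for the fact that the strong product of $K_2$ with $\Gamma$ is a DDG with parameters $(2m,2k+1,2k,2\lambda+2,m,2)$, and then notes that $Z_2\times G$ acts regularly, whereas you reprove that construction from scratch. Your Kronecker computation is accurate: $M=J_2\otimes(B+I_m)-I_{2m}$, the identity $(B+I_m)^2=(k-\lambda)I_m+B+(\lambda+1)J_m$ is exactly where the hypothesis $\mu=\lambda+1$ enters, and the resulting $M^2=2(k-\lambda-1)(J_2\otimes I_m)+(2\lambda+2)J_{2m}+I_{2m}$ immediately exhibits the canonical partition into the $m$ pairs $\{(1,x),(2,x)\}$ with $\lambda_1=2k$ and $\lambda_2=2\lambda+2$. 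What your route buys is a self-contained verification (useful since the reader otherwise needs \cite{ddg}) and it makes visible precisely why the strongly regular parameters of the form $(m,k,\lambda,\lambda+1)$ are needed; what the paper's route buys is brevity. On the Cayley side you do essentially what the paper does, but more carefully: you check that the copy-swap together with the $G$-action preserves adjacency, and that the action is transitive with trivial stabiliser, then invoke Theorem~\ref{regular}; the paper compresses this to the single assertion that $Z_2\times G$ acts regularly.
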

\begin{proof}
By \cite[Construction 4.10]{ddg}, the strong product of $K_2$ with $\Gamma$ is a DDG with parameters $(2m,2k+1,2k,2 \lambda + 2,m,2)$.
The group $Z_2 \times G$ acts regularly on this DDG. 
\end{proof}

Let $q$ be a prime power. If $q \equiv 3\ (mod\ 4)$ then the set of non-zero squares in $\GF(q)$ forms a difference set in the additive group of $\GF(q)$, and in case $q \equiv 1\ (mod\ 4)$ 
the set of non-zero squares in $\GF(q)$ forms a partial difference set in the additive group of $\GF(q)$. 
The conclusion is  that the case $q \equiv 3\ (mod\ 4)$ yields a Cayley digraph, and the case $q \equiv 1\ (mod\ 4)$ yields a Cayley graph. 
The adjacency matrix of the Cayley digraph obtained in the case $q \equiv 3\ (mod\ 4)$ is the incidence matrix of a symmetric design called a Paley design, which is a Hadamard design with parameters 
$(q,\frac{q-1}{2},\frac{q-3}{4})$. The fact that $-1$ is not a square in the field $\GF(q)$ when $q \equiv 3\ (mod\ 4)$ implies that the incidence matrix of a Paley design is skew. The graph
obtained in the case $q \equiv 1\ (mod\ 4)$ is called a Paley graph, which is a strongly regular graph with parameters $(q,\frac{q-1}{2},\frac{q-5}{4},\frac{q-1}{4})$.

The Paley graphs belong to the family of SRGs with parameters $(v, k,\lambda, \lambda +1)$, so the following corollary is a direct consequence of Theorem \ref{Con-4.10-hkm}.

\begin{cor} \label{Paley-cor} 
Let $q$ be a prime power, $q \equiv 1\ (mod\ 4)$. Then there exists a divisible design Cayley graph with parameters $(2q,q,q-1,\frac{q-1}{2},q,2)$.
\end{cor}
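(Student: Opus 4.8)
The plan is to realize the desired graph as the strong product of $K_2$ with the Paley graph $P(q)$ and then simply invoke Theorem~\ref{Con-4.10-hkm}. First I would recall that for $q \equiv 1\ (mod\ 4)$ the Paley graph $P(q) = \Cay(\GF(q)^{+}, S)$, where $S$ is the set of non-zero squares of $\GF(q)$, is strongly regular with parameters $\left(q, \frac{q-1}{2}, \frac{q-5}{4}, \frac{q-1}{4}\right)$, as noted just before the corollary. Since $\frac{q-5}{4} + 1 = \frac{q-1}{4}$, these parameters have the shape $(m, k, \lambda, \lambda+1)$ required by Theorem~\ref{Con-4.10-hkm}, with $m = q$, $k = \frac{q-1}{2}$ and $\lambda = \frac{q-5}{4}$.

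Next I would observe that $P(q)$, being a Cayley graph on the additive group $(\GF(q), +)$, admits that group as a regular automorphism group; equivalently, by Theorem~\ref{regular}, $\mathrm{Aut}(P(q))$ contains a regular subgroup. This is precisely the hypothesis needed to apply Theorem~\ref{Con-4.10-hkm} with $\Gamma = P(q)$, which then tells us that the strong product of $K_2$ with $P(q)$ is a Cayley DDG with parameters $(2m, 2k+1, 2k, 2\lambda + 2, m, 2)$. It remains only to substitute the values above: $2m = 2q$, $\ 2k+1 = q$, $\ 2k = q-1$, $\ 2\lambda + 2 = \frac{q-5}{2} + 2 = \frac{q-1}{2}$, $\ m = q$ and $n = 2$, which yields exactly the claimed parameters $\left(2q, q, q-1, \frac{q-1}{2}, q, 2\right)$.

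Because the argument reduces entirely to a well-known family of strongly regular graphs together with the already-established construction of Theorem~\ref{Con-4.10-hkm}, there is no real obstacle here; the corollary is a bookkeeping consequence. The only point deserving a moment's care is the arithmetic verification that $2\lambda+2 = \frac{q-1}{2}$, and — if one wishes to be fully explicit, though the statement does not require it — the observation that the resulting DDG is proper precisely when $q > 5$, since for $q = 5$ one has $\lambda_1 = \lambda_2 = 2$.
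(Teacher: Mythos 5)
Your argument is exactly the paper's: it deduces the corollary from Theorem~\ref{Con-4.10-hkm} applied to the Paley graph with parameters $\left(q,\frac{q-1}{2},\frac{q-5}{4},\frac{q-1}{4}\right)$ and the regular action of the additive group of $\GF(q)$, and the parameter substitution is correct. One small slip in your optional aside: for $q=5$ the resulting DDG has $\lambda_1 = q-1 = 4$ and $\lambda_2 = \frac{q-1}{2} = 2$, so it is still proper (it is the graph $(10,5,4,2,5,2)$ in the paper's Table~\ref{table1}); your claim that $\lambda_1=\lambda_2=2$ there is incorrect, though it does not affect the proof of the stated corollary.
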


An $m \times m$ matrix $H$ is a Hadamard matrix if every entry is 1 or -1, and $HH^t = mI_m$. A Hadamard matrix $H$ is called graphical if $H$ is symmetric with constant diagonal, 
and regular if all row and column sums are equal (to $\ell$ say). Without loss of generality we assume that a graphical Hadamard matrix has diagonal entries -1. 

\bigskip

Consider a regular graphical Hadamard matrix $H$. It is well known that $\ell^2 = m$ and that $\frac{1}{2} (H + J)$ is the adjacency matrix of a $(m, (m + \ell)/2, (m + 2 \ell)/4)$-graph.

\begin{thm} \label{Thm-3.9-dc-wh}
Let $H$ be a regular graphical Hadamard matrix of order $4u^2$ with
diagonal entries $-1$ and row sum $2u$ ($u$ can be negative), having a regular automorphism group $G$.
Further, let $D$ be the adjacency matrix of an $(n, k', \lambda)$-graph with a regular automorphism group $G_1$.
Replace each entry $-1$ of $H$ by $D$, and each $+1$ by $J_n-D$.
Then we obtain the adjacency matrix $A$ of a Cayley DDG with parameters
$(4nu^2, 2nu^2+u(n-2k'), 4\lambda u^2+u(2u+1)(n-2k'), nu^2+u(n-2k'), 4u^2,n).$
\end{thm}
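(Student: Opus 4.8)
The plan is to introduce coordinates exhibiting $A$ as a ``blown-up'' Hadamard matrix, verify the divisible design structure by a block computation, and then build a regular automorphism group out of those of $H$ and of $D$, finishing with Theorem~\ref{regular}. Concretely, I would index the rows and columns of $A$ by $X\times Y$, where $X$ is the row index set of $H$ (so $|X|=4u^2$) and $Y$ is the vertex set of the $(n,k',\lambda)$-graph $\Gamma$ with adjacency matrix $D$ (so $|Y|=n$); setting $B_{ij}=D$ if $H_{ij}=-1$ and $B_{ij}=J_n-D$ if $H_{ij}=+1$, we have $A=\sum_{i,j\in X}E_{ij}\otimes B_{ij}$ with $E_{ij}$ the $X\times X$ matrix unit. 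Since $H$ is symmetric with $-1$ on the diagonal and $D$ is symmetric with zero diagonal, $A$ is a symmetric $(0,1)$-matrix with zero diagonal, hence the adjacency matrix of a graph on $4nu^2$ vertices whose natural partition consists of the $4u^2$ fibres $\{i\}\times Y$, each of size $n$.

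The next step is to check that this graph is a DDG with the stated parameters; this is the blow-up construction available in the literature, but a direct verification is also short and uses only three facts: the identity $D^2=(k'-\lambda)I_n+\lambda J_n$ defining an $(n,k',\lambda)$-graph, together with $DJ_n=J_nD=k'J_n$ and $(J_n-D)^2=(n-2k')J_n+D^2$; that every row of $H$ has exactly $2u^2+u$ entries $+1$ and $2u^2-u$ entries $-1$ (from row sum $\ell=2u$ and $\ell^2=m=4u^2$); and, for distinct rows $i,i'$ of $H$, the orthogonality $\sum_jH_{ij}H_{i'j}=0$, which together with the row sums forces the numbers of columns $j$ with $(H_{ij},H_{i'j})$ equal to $(-,-),(-,+),(+,-),(+,+)$ to be $u^2-u,\,u^2,\,u^2,\,u^2+u$. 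One then gets $\sum_j(B_{ij}^2)_{xy}=\lambda_1=4\lambda u^2+u(2u+1)(n-2k')$ for $x\neq y$ in the same fibre, the constant value $\sum_j(B_{ij}B_{i'j})_{xy}=\lambda_2=nu^2+u(n-2k')$ for different fibres (the $D^2$-term cancels because $(u^2-u)-u^2-u^2+(u^2+u)=0$), and degree $(2u^2-u)k'+(2u^2+u)(n-k')=2nu^2+u(n-2k')$. I expect this block-matrix bookkeeping to be the most laborious part of the argument, though it is entirely routine; it applies verbatim when $u<0$, since the counts $u(2u\pm1)$ stay nonnegative.

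For the Cayley property, by Theorem~\ref{regular} it suffices to produce a regular subgroup of the automorphism group of the graph. Let $G$ act regularly on $X$ as an automorphism group of $H$, so the permutation matrix $P_g$ of $g\in G$ satisfies $P_g^tHP_g=H$, equivalently $H_{g(i),g(j)}=H_{ij}$, equivalently $B_{g(i),g(j)}=B_{ij}$ for all $i,j$. Let $G_1$ act regularly on $Y$ as an automorphism group of $\Gamma$, so the permutation matrix $Q_h$ of $h\in G_1$ satisfies $Q_h^tDQ_h=D$; since $Q_h$ is a permutation matrix it also fixes $J_n$, whence $Q_h^t(J_n-D)Q_h=J_n-D$, and therefore $Q_h^tB_{ij}Q_h=B_{ij}$ for all $i,j$, uniformly in the two cases.

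Finally I would let $G\times G_1$ act on $X\times Y$ by $(g,h)\cdot(i,x)=(g(i),h(x))$, with permutation matrix $P_g\otimes Q_h$; this action is regular, being a product of regular actions, on the $4u^2\cdot n=4nu^2$ vertices. A short computation using $(P_g\otimes Q_h)^t=P_g^t\otimes Q_h^t$ and reindexing gives $(P_g\otimes Q_h)^tA(P_g\otimes Q_h)=\sum_{i,j}(P_g^tE_{ij}P_g)\otimes(Q_h^tB_{ij}Q_h)=\sum_{i,j}E_{g^{-1}(i),g^{-1}(j)}\otimes B_{ij}=\sum_{i,j}E_{ij}\otimes B_{g(i),g(j)}=\sum_{i,j}E_{ij}\otimes B_{ij}=A$, so $G\times G_1$ is a regular automorphism group of the DDG. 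Theorem~\ref{regular} then shows the graph is a Cayley DDG, completing the proof. The only points requiring care are the index bookkeeping in the Kronecker sum and the simultaneous use of ``$Q_h$ commutes with $D$'' and ``$Q_h$ fixes $J_n$''; everything else is formal.
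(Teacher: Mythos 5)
Your proof is correct and takes essentially the same route as the paper: the paper simply cites \cite[Theorem 3.9]{dc-whh} for the DDG parameter computation and then observes that the product of $G$ and $G_1$, acting via the Kronecker product of the two regular actions, is a regular automorphism group, which is exactly your concluding step via Theorem~\ref{regular}. The only difference is that you carry out the block-matrix verification of the parameters explicitly instead of quoting the known construction, and that verification (row counts $2u^2\pm u$, intersection counts $u^2-u,u^2,u^2,u^2+u$, and the cancellation of the $D^2$-terms) is accurate.
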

\begin{proof}
The matrix $A$ is the adjacency matrix of a DDG with parameters $(4nu^2, 2nu^2+u(n-2k'), 4\lambda u^2+u(2u+1)(n-2k'), nu^2+u(n-2k'), 4u^2,n)$ (see \cite[Theorem 3.9]{dc-whh}).
Obviously, $G \otimes G_1$ acts regularly on the DDG obtained.
\end{proof}

In the following theorem we give parameters of divisible design Cayley graphs for $v \le 27$, $0 < \lambda_2 < k$ and $\lambda_1 < k$, whose existence were established using a computer.

\begin{thm}\label{comp}
There exist divisible design Cayley graphs with the following parameters:
\begin{displaymath}   
\begin{tabular}{l l l l}
$(12,5,0,2,6,2)$ \hspace{0.5cm}  & $(12,5,1,2,4,3)$ \hspace{0.5cm}  & $(12,6,2,3,3,4)$ \hspace{0.5cm}  & $(12,7,3,4,4,3)$ \\
$(18,9,6,4,6,3)$ \hspace{0.5cm}  & $(20,7,3,2,4,5)$  \hspace{0.5cm}  & $(20,7,6,2,10,2)$ \hspace{0.5cm} & $(20,13,9,8,4,5)$ \\
$(20,13,12,8,10,2)$ \hspace{0.5cm} & $(24,7,0,2,8,3)$ \hspace{0.5cm} & $(24,8,4,2,4,6)$ \hspace{0.5cm}  & $(24,10,2,4,12,2)$  \\
 $(24,10,3,4,8,3)$ \hspace{0.5cm}  & $(24,14,6,8,12,2)$ \hspace{0.5cm} &$(24,14,7,8,8,3)$ \hspace{0.5cm}  & $(24,16,12,10,4,6)$ \\
$(27,18,9,12,9,3).$                 &                                   &                                  &   \\
\end{tabular}                
\end{displaymath}
\end{thm}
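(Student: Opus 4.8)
The statement is a computational existence result: for each of the seventeen parameter tuples listed, one must exhibit a group $G$ of the appropriate order $v$ and a connection set $S \subseteq G \setminus \{e\}$ with $S = S^{-1}$ such that $\Cay(G,S)$ is a divisible design graph with the stated parameters $(v,k,\lambda_1,\lambda_2,m,n)$. My plan is to organize the proof around Theorem~\ref{cosets-DDG}, which reduces the verification to a purely algebraic condition. For each target parameter set I would first select a candidate group $G$ of order $v$ (running over all groups of that order, of which there are only finitely many and only a handful for $v \le 27$), together with a candidate subgroup $H \le G$ of order $n$ whose cosets are to serve as the canonical partition. I would then search for $S$ with $|S| = k$, $e \notin S$, $S = S^{-1}$, and such that the multiset $SS^{-1}$ has the form $\lambda_2 A + \lambda_1 (H \setminus \{e\}) + k\,e$ where $A = G \setminus H$ (equivalently $B \cup \{e\} = H$ in the notation of Theorem~\ref{cosets-DDG}, with $b = \lambda_1$, $a = \lambda_2$ after matching the Deza-graph parameters to the DDG parameters). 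Once such an $S$ is found, Theorem~\ref{cosets-DDG} immediately certifies that $\Cay(G,S)$ is a DDG with the cosets of $H$ as its canonical classes, hence with the listed parameters.

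Concretely, the proof text would, for each row of the table, display an explicit group presentation (or name the group, e.g.\ $Z_{12}$, $Z_6 \times Z_2$, $A_4$, $D_{24}$, etc.), an explicit connection set $S$ as a list of group elements, and the explicit decomposition of $SS^{-1}$ as a linear combination of $e$, the nontrivial elements of the chosen subgroup, and the remaining elements. The bookkeeping is entirely mechanical: one checks $|S| = k$, $S = S^{-1}$, and computes the multiset of all products $s_1 s_2^{-1}$, verifying it splits as required. Since these are small groups, each such verification is a short finite computation, and the paper already states (in the paragraph preceding the theorem, and more generally in Section~\ref{nonexistence}) that Magma \cite{magma} and GAP \cite{gap-grape} were used for exactly this kind of search; I would cite those systems for the search and then present the resulting $(G,S)$ data so that the verification can be done by hand or independently reproduced.

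There is essentially no deep obstacle in the proof itself once the data is in hand — the content is in the search, not in the justification. The main practical difficulty is that the search space (all groups of order $v$, all subgroups $H$ of order $n$, and all $k$-subsets $S$ closed under inversion) grows quickly, so a naive brute force over $S$ is infeasible for the larger values of $v$; this is why Theorem~\ref{cosets-DDG} is crucial, as it lets one instead iterate over subgroups $H$ and then solve the combinatorial condition on $SS^{-1}$ relative to $H$, which prunes the search dramatically. A secondary point I would address is that a handful of these parameter sets may already be realized by the earlier constructions of this section — for instance a tuple matching the shape $(2q, q, q-1, \tfrac{q-1}{2}, q, 2)$ would follow from Corollary~\ref{Paley-cor}, and tuples of the shape $(vn, kn, kn, \lambda n, v, n)$ from Theorem~\ref{Con-4.4-hkm} — so where applicable I would note the overlap and simply invoke the relevant earlier result rather than re-exhibiting a Cayley construction. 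For the remaining genuinely new cases, the proof is the explicit table of $(G, S)$ pairs together with the one-line invocation of Theorem~\ref{cosets-DDG} for each.
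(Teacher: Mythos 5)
Your proposal matches the paper's approach: the authors establish these seventeen parameter sets by computer search with Magma and GAP, using Theorem~\ref{cosets-DDG} to organize the search over groups of order $v$, subgroups of order $n$, and inverse-closed connection sets $S$, and they record the realizing regular groups in the accompanying table rather than giving a by-hand argument. Your plan to additionally display explicit $(G,S)$ data is slightly more detailed than what the paper prints, but the method and its justification are the same.
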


\section{Small parameters} \label{small}

All feasible parameter sets $(v, k, \lambda_1, \lambda_2, m, n)$ for DDGs on at most 27 vertices are given in \cite{ddg}. 
The DDGs on at most 27 vertices were further studied in \cite{dc-whh}, including the walk-regularity of DDGs. 
The only set of parameters for which the existence of DDGs have not been decided in \cite{ddg} and \cite{dc-whh} is (27,16,12,9,9,3).
Recently, Dmitry Panasenko and Leonid Shalaginov \cite{dp} showed that there is no quotient matrix for a putative DDG with parameters (27,16,12,9,9,3), and therefore there is no DDG(27,16,12,9,9,3).
Thereby, the existence for DDGs with parameters $(v, k, \lambda_1, \lambda_2, m, n)$ is decided for all DDGs with $v \le 27$.
In {\rm T}able \ref{table1} we give parameters of proper DDGs with $v\leq 27$, $0<\lambda_2<2k-v$, $\lambda_1<k$, and the number of the divisible design Cayley graphs with the given parameters, 
up to isomorphism ($\#$DDCGs). Constructions of divisible design Cayley graphs with $\lambda_2=0$ and $\lambda_1=k$ are given in Theorem \ref{Kronecker} and Theorem \ref{Con-4.4-hkm}, respectively.

\begin{table}[H]
{\footnotesize
\begin{center}
\begin{tabular}{|rrrrrr|c|ccc|}
\hline
$v$ & $k$ & $\lambda_1$ & $\lambda_2$ & $m$ & $n$ & reference & Cayley & reference & $\#$DDCGs \\[3pt]
\hline
 8 &  4 &  0 &  2 &  4 & 2 &  \cite{ddg} & yes& Example \ref{v=8} &1 \\
10 &  5 &  4 &  2 &  5 & 2 &  \cite{ddg} & yes & Corollary \ref{Paley-cor} &1 \\
12 &  5 &  0 &  2 &  6 & 2 &  \cite{ddg} & yes& Theorem \ref{comp} &1 \\
12 &  5 &  1 &  2 &  4 & 3 &  \cite{ddg} & yes& Theorem \ref{comp} &1 \\
12 &  6 &  2 &  3 &  3 & 4 &  \cite{ddg} & yes& Theorem \ref{comp} &1 \\
12 &  7 &  3 &  4 &  4 & 3 &  \cite{ddg} & yes& Theorem \ref{comp} &1 \\
15 &  4 &  0 &  1 &  5 & 3 &  \cite{ddg} & no& &0 \\
18 &  9 &  6 &  4 &  6 & 3 &  \cite{ddg} & yes& Theorem \ref{comp} &1 \\
18 &  9 &  8 &  4 &  9 & 2 &  \cite{ddg} & yes & Corollary \ref{Paley-cor} &1 \\
20 &  7 &  3 &  2 &  4 & 5 &  \cite{ddg} & yes& Theorem \ref{comp} &1 \\
20 &  7 &  6 &  2 & 10 & 2 &  \cite{ddg} & yes& Theorem \ref{comp} &1 \\
20 &  9 &  0 &  4 & 10 & 2 &  \cite{ddg} & no & &0 \\
20 & 13 &  9 &  8 &  4 & 5 &  \cite{ddg} & yes& Theorem \ref{comp} &1 \\
20 & 13 & 12 &  8 & 10 & 2 &  \cite{ddg} & yes& Theorem \ref{comp} &1 \\
24 &  6 &  2 &  1 &  3 & 8 &  \cite{ddg} & no& &0 \\
24 &  7 &  0 &  2 &  8 & 3 &  \cite{ddg} & yes& Theorem \ref{comp} &1 \\
24 &  8 &  4 &  2 &  4 & 6 &  \cite{ddg} & yes& Theorem \ref{comp} &3 \\
24 & 10 &  2 &  4 & 12 & 2 &  \cite{dc-whh} &yes & Theorem \ref{comp} &3 \\
24 & 10 &  3 &  4 &  8 & 3 &  \cite{dc-whh} & yes& Theorem \ref{comp} &1 \\
24 & 10 &  6 &  3 &  3 & 8 &  \cite{ddg} & no& &0 \\
24 & 14 &  6 &  8 & 12 & 2 &  \cite{dc-whh} & yes& Theorem \ref{comp} &1 \\
24 & 14 &  7 &  8 &  8 & 3 &  \cite{ddg} & yes& Theorem \ref{comp} &1 \\
24 & 16 & 12 & 10 &  4 & 6 &  \cite{ddg} & yes& Theorem \ref{comp} &3 \\
26 & 13 & 12 &  6 & 13 & 2 &  \cite{ddg} & yes & Corollary \ref{Paley-cor} &1 \\
27 &  8 &  4 &  2 &  9 & 3 &  \cite{dp} & no& &0 \\
27 & 18 &  9 & 12 &  9 & 3 &  \cite{ddg} & yes& Theorem \ref{comp} &2 \\
\hline
\end{tabular}
\caption{Proper DDGs with $v\leq 27$, $0<\lambda_2<2k-v$, $\lambda_1<k$.}\label{table1}
\end{center}
}
\end{table}

\begin{table}[H] {\scriptsize
\label{DDDDs}
\begin{center}
\begin{tabular}{ c|c|c}
DDCG & regular groups acting on a DDCG  & $\#$ DDCG
\\
\hline\hline
(8,4,0,2,4,2) &  $D_8$ ($Z_4\times Z_2, E_8$) & 1 \\
\hline
(10,5,4,2,5,2) & $D_{10}$ ($Z_{10}$) & 1 \\
\hline
(12,5,1,2,4,3) & $D_{12}$ ($Z_3: Z_4, Z_{12}, A_4, Z_6\times Z_2$) & 1 \\
(12,5,0,2,6,2) & $A_4$ & 1 \\
(12,6,2,3,3,4) & $A_4$ & 1 \\
(12,7,3,4,4,3) & $D_{12}$ ($Z_3: Z_4, Z_{12}, Z_6\times Z_2$) & 1 \\
\hline
(18,9,6,4,6,3) & $Z_3\times S_3$ ($E_9:Z_2$) & 1 \\
(18,9,8,4,9,2) & $Z_3\times S_3$ ($E_9:Z_2, Z_6\times Z_3$) & 1 \\
\hline
(20,7,3,2,4,5) & $D_{20}$ ($Z_5: Z_4, Z_{20}, Z_{10}\times Z_2$) & 1 \\
(20,7,6,2,10,2) & $Z_5: Z_4$ & 1 \\
(20,13,9,8,4,5) & $D_{20}$ ($Z_5: Z_4, Z_{20}, Z_{10}\times Z_2$) & 1 \\
(20,13,12,8,10,2) & $Z_5: Z_4$ & 1 \\
\hline
(24,7,0,2,8,3) & $S_4$ & 1 \\
(24,8,4,2,4,6) & $Z_4\times S_3$ ($D_{24}$, $(Z_6\times Z_2):Z_2$) & 1 \\ 
							&  $Z_4\times S_3$ ($D_{24}$, $Z_2\times (Z_3:Z_4)$, $(Z_6\times Z_2):Z_2$, & 1 \\ 
							&$Z_{12}\times Z_2$, $Z_3\times D_8$, $S_4$, $Z_2\times A_4$, $Z_2\times Z_2 \times S_3$, $Z_6\times Z_2\times Z_2$) & \\
							& $Z_4\times S_3$ ($D_{24}$, $(Z_6\times Z_2):Z_2$, $Z_2\times (Z_3:Z_4)$, $Z_{12}\times Z_2$, $Z_3\times D_8$ )& 1  \\		
(24,10,2,4,12,2) & $S_4$ & 1 \\
(24,10,3,4,8,3) & $S_4$ & 1 \\
(24,14,6,8,12,2) & $S_4$ & 1 \\
(24,14,7,8,8,3) & $S_4$ & 1 \\
(24,16,12,10,4,6) & $Z_4\times S_3$ ($D_{24}$, $(Z_6\times Z_2):Z_2$, $Z_2\times Z_2 \times S_3$ ) & 1 \\ 
							&  $Z_4\times S_3$ ($D_{24}$, $Z_2\times (Z_3:Z_4)$, $(Z_6\times Z_2):Z_2$, $Z_{12}\times Z_2$, &1\\
							&$Z_3\times D_8$, $S_4$, $Z_2\times A_4$, $Z_2\times Z_2 \times S_3$, $Z_6\times Z_2\times Z_2$) & \\
							& $Z_4\times S_3$ ($D_{24}$, $Z_2\times (Z_3:Z_4)$, $(Z_6\times Z_2):Z_2$, $Z_{12}\times Z_2$, & 1\\
							& $Z_3\times D_8$,  $Z_2\times Z_2 \times S_3$, $Z_6\times Z_2\times Z_2$) &  \\
\hline
(26,13,12,6,13,2) & $D_{26}$ ($Z_{26}$) & 1  \\
\hline
(27,18,9,12,9,3) & $E_9:Z_3$, ($Z_9:Z_3$) & 2  \\
\hline\hline
\end{tabular} 
\caption{\footnotesize Proper divisible design Cayley graphs with $v \le 27$, $0 < \lambda_2 < k$, $\lambda_1 < k$}
\end{center} }
\end{table}

\bigskip

\noindent {\bf Acknowledgement} \\
This work has been fully supported by {\rm C}roatian Science Foundation under the projects 6732 and 5713.

\end{document}